\documentclass[10pt]{article}

\usepackage[T1]{fontenc}

\usepackage{amsmath}
\usepackage{amsthm}
\usepackage{amssymb}

\usepackage{enumitem}

\usepackage{forest}
\usepackage{xcolor}
\definecolor{bluedp}{HTML}{63A3D1}
\definecolor{reddp}{HTML}{F57A7A}

\usepackage[small, pagestyles]{titlesec}
\usepackage[small, it]{caption}

\usepackage[square,comma,numbers,sort&compress]{natbib}

\theoremstyle{plain}
\newtheorem{theorem}{Theorem}[section]
\newtheorem{proposition}[theorem]{Proposition}

\newtheorem{corollary}[theorem]{Corollary}
\newtheorem{conjecture}[theorem]{Conjecture}
\newtheorem{observation}[theorem]{Observation}

\renewenvironment{abstract}
  {
    \begin{list}{}%
      {\setlength{\rightmargin}{1in}%
       \setlength{\leftmargin}{1in}}%
    \item[]\ignorespaces\begin{small}
  }
  {
    \end{small}\unskip\end{list}
  }

\newpagestyle{main}[\small]{
  \headrule
  \sethead[\usepage][][]
          {\sc Boolean--Eulerian numbers}{}{\usepage}
}

\titleformat{\section}
  {\large\sc}
  {\thesection.}{1em}{}

\newcommand{\boel}{\mathrm{BoEl}}
\DeclareMathOperator{\des}{des}
\DeclareMathOperator{\pk}{pk}

\usepackage[bookmarks]{hyperref}
\hypersetup{
	colorlinks=true,
	linkcolor=black,
	anchorcolor=black,
	citecolor=black,
	urlcolor=black,
	pdfpagemode=UseThumbs,
	pdftitle={Boolean--Eulerian numbers},
	pdfsubject={Combinatorics},
	pdfauthor={Bona and Vatter},
}

\title{\sc Boolean--Eulerian numbers}

\author{
	Mikl\'os B\'ona%
	\footnote{University of Florida, Gainesville, Florida, USA.}
	\quad
	and
	\quad
	Vincent Vatter%
	\footnotemark[1]
}

\date{\today}

\begin{document}
\maketitle

\forestset{
  my tree/.style={
    for tree={
      circle,
      draw,
      very thick,
      edge={very thick},
      minimum size=7.5mm,
      inner sep=0pt,
      s sep=8mm,
      l sep=5mm,
      fill=white,
      text=black,
      font=\bfseries\normalsize
    }
  }
}

\begin{abstract}
We study decreasing binary trees in which every vertex with two children is colored red or blue. We construct two bijections. The first, to ordered set partitions into odd-sized blocks each arranged as an alternating permutation, shows that the exponential generating function of these trees is $1/(1-\tan z)$. The second, to nonplane decreasing~$1$-$2$ trees paired with a binary label on each non-root vertex, proves combinatorially that the count equals $2^{n-1}$ times the~$n$th Euler number. Refining by the number of right edges yields the Boolean--Eulerian polynomials, which are an explicit algebraic transform of the classical Eulerian polynomials. The Foata--Strehl orbit decomposition, recast in the decreasing-binary-tree model, gives a direct combinatorial proof of gamma-positivity, and the algebraic transform carries real-rootedness and interlacing of zeros from the Eulerian polynomials to the Boolean--Eulerian polynomials.
\end{abstract}

\pagestyle{main}


\section{Introduction}
\label{sec:intro}

We write~$[n]$ for the set $\{1, 2, \ldots, n\}$ and $S_n$ for the set of permutations of~$[n]$. A \emph{descent} of a permutation $p = p_1 \cdots p_n$ is an index~$i$ with $1 \leq i \leq n-1$ and $p_i > p_{i+1}$; a \emph{peak} is an index~$i$ with $2 \leq i \leq n-1$ and $p_{i-1} < p_i > p_{i+1}$. We write $\des(p)$ and $\pk(p)$ for the number of descents and peaks of $p$. Euler has numbers attached to both of these statistics, and both appear in this paper, so we take a moment to distinguish them. The \emph{Eulerian numbers} count permutations of~$[n]$ by their number of descents, and arise as the coefficients of the Eulerian polynomial $A_n(x) = \sum_{p \in S_n} x^{\des(p)}$. The \emph{Euler number} $E_n$, by contrast, counts \emph{alternating} permutations, meaning permutations $p = p_1 \cdots p_n$ with $p_1 < p_2 > p_3 < \cdots$; equivalently, permutations of~$[n]$ in which every even index is a peak.

A \emph{decreasing binary tree} on~$[n]$ is a rooted tree on the vertex set~$[n]$ in which every vertex has a larger label than its children, every vertex has at most two children, and every child is designated left or right even when it is an only child. These trees are in bijection with permutations of~$[n]$: send $p = p_1 \cdots p_n$ to the tree $T(p)$ whose root is labeled~$n$, with the entries to the left and right of~$n$ in $p$ forming the left and right subtrees recursively, and recover $p$ by reading $T(p)$ in-order. Under this bijection, right edges of $T(p)$ correspond to descents of $p$, and vertices with two children correspond to peaks. See Figure~\ref{fig:perm-binary-tree} for an example.

\begin{figure}[t]
\centering
\begin{forest}
my tree
[7, fill=black!20,
  [6, fill=black!20,
    [1]
    [5
      [, no edge, draw=none]
      [3]
    ]
  ]
  [4
    [2]
    [, no edge, draw=none]
  ]
]
\end{forest}
\caption{The permutation \(p = 1653724\) and its decreasing binary tree \(T(p)\). The peaks of \(p\) are at positions $2$ and $5$, with values $6$ and $7$; these correspond to the shaded vertices of $T(p)$, namely the vertices with two children. The descents of \(p\) are at positions $2, 3, 5$, corresponding to the right edges of $T(p)$.}
\label{fig:perm-binary-tree}
\end{figure}

Binary plane trees on~$n$ unlabeled vertices are counted by the Catalan numbers, and refining by the number of right edges gives the Narayana numbers. In~\cite{bona:stack-sorting-p:, bona:boolean--naraya:}, the first author studied binary plane trees in which every vertex with two children receives an additional label from $\{0,1\}$, calling these \emph{$0$-$1$-trees}, and refined the count by the number of right edges. The resulting Boolean--Catalan and Boolean--Narayana numbers share many nice enumerative properties with the Catalan and Narayana numbers: symmetry, unimodality, log-concavity, real-rootedness, and interlacing. In this paper, we carry out the labeled analogue of this investigation.

A \emph{bicolored decreasing binary tree} on~$[n]$ is a decreasing binary tree on~$[n]$ in which every vertex with two children is additionally colored red or blue. If a permutation $p$ has $m$ peaks, then $T(p)$ has $m$ vertices with two children, each of which can be colored in two ways, so the total number of bicolored decreasing binary trees on~$[n]$ is
\[
	H_n = \sum_{p \in S_n} 2^{\pk(p)}.
\]
Refining by right edges, we define the \emph{Boolean--Eulerian number} $\boel(n,k)$ to be the number of bicolored decreasing binary trees on~$[n]$ with precisely $k-1$ right edges, and we write
\[
	B_n(u) = \sum_{k=1}^{n} \boel(n,k)\, u^{k-1} = \sum_{p \in S_n} 2^{\pk(p)}\, u^{\des(p)}
\]
for the \emph{Boolean--Eulerian polynomial}.

Our first results concern the total count $H_n$. In Section~\ref{sec:total}, we show that the exponential generating function of $H_n$ is $1/(1 - \tan z)$, and we construct a bijection between bicolored decreasing binary trees on~$[n]$ and ordered set partitions of~$[n]$ into blocks of odd size, each block arranged as an alternating permutation. In Section~\ref{sec:bijection}, we construct a second bijection, to pairs consisting of a nonplane decreasing~$1$-$2$ tree on~$[n]$ and a binary label on each non-root vertex. Since nonplane decreasing~$1$-$2$ trees on~$[n]$ are counted by the Euler number $E_n$, this proves combinatorially that $H_n = 2^{n-1} E_n$ for $n \geq 1$, and hence that
\[
	\frac{1}{1 - \tan z} = \frac{1 + \sec 2z + \tan 2z}{2}.
\]

The remaining sections concern the Boolean--Eulerian polynomials. In Section~\ref{sec:polynomials}, we derive a bivariate differential equation for their exponential generating function. The key structural result, in Section~\ref{sec:transform}, is that $B_n(u)$ is an explicit algebraic transform of the classical Eulerian polynomial $A_n(u)$: specifically, $B_n(u) = g(u)^{n-1}\, A_n(q(u))$ for explicit algebraic functions $g$ and $q$, where $q$ restricts to an increasing bijection from $(-\infty, 0)$ to itself. The known properties of the Eulerian polynomials therefore transfer to the Boolean--Eulerian polynomials; in particular, we show that $B_n(u)$ is gamma-positive, real-rooted, and that the family $\{B_n(u)\}_{n \geq 2}$ has interlacing zeros. For gamma-positivity we give a combinatorial proof, via a tree version of the Foata--Strehl orbit decomposition. Finally, in Section~\ref{sec:unimodal}, we give a combinatorial proof of unimodality by importing an injection from~\cite{bona:boolean--naraya:} that extends without modification to the labeled setting.


\section{Total enumeration}
\label{sec:total}

Write $H_n$ for the number of bicolored decreasing binary trees on~$[n]$, with $H_0 = 1$, and let $H(z) = \sum_{n \geq 0} H_n z^n/n!$ be the associated exponential generating function.

\begin{proposition}
\label{prp:diffeq}
The generating function $H(z)$ satisfies $H'(z) = H(z)^2 + (H(z) - 1)^2$, and therefore $H(z) = 1/(1 - \tan z)$.
\end{proposition}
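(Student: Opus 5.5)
The plan is to derive the differential equation from the standard root-removal decomposition of decreasing binary trees, and then solve it. Take a bicolored decreasing binary tree on~$[n+1]$, $n \geq 0$. Its root is labeled $n+1$. Deleting the root leaves an ordered pair $(L,R)$ of (possibly empty) bicolored decreasing binary trees, the left and right subtrees. Their vertex sets split $[n]$ into two parts, and each part is relabeled order-preservingly. The coloring of every non-root vertex is inherited by $L$ and $R$, since each such vertex has the same children in the subtree as in the whole tree. The only extra datum is the color of the root. The root has two children exactly when both $L$ and $R$ are nonempty, and then it carries one of two colors; otherwise it is uncolored.

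This gives the recurrence
\[
H_{n+1} = \sum_{j=0}^{n} \binom{n}{j} H_j H_{n-j} \;+\; \sum_{j=1}^{n-1} \binom{n}{j} H_j H_{n-j}.
\]
The first sum counts each pair $(L,R)$ once. The second sum adds a second copy of each pair with both parts nonempty, which accounts for the second color of the root.

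Next, translate this to exponential generating functions. Removing the root and choosing which labels go left is the labeled product, so the first sum corresponds to $H(z)^2$. The second sum ranges over pairs of nonempty trees, whose EGF is $H(z)-1$, so it corresponds to $(H(z)-1)^2$. Shifting the index $n+1 \mapsto n$ corresponds to differentiation. Together with $H(0) = 1$, this yields
\[
H'(z) = H(z)^2 + (H(z)-1)^2 .
\]
The one point needing care is checking the boundary cases. When $n=0$ we get $H_1 = 1$, and $(H-1)^2$ contributes nothing there, as it should.

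To solve the equation, substitute $H = 1/(1-F)$ with $F(0)=0$. Then $H-1 = F/(1-F)$, and $H' = F'/(1-F)^2$. The equation becomes $F'/(1-F)^2 = (1+F^2)/(1-F)^2$, that is, $F' = 1+F^2$. With $F(0)=0$ this gives $F(z) = \tan z$ by uniqueness of solutions of this ODE; equivalently, the coefficients are determined recursively as formal power series. Hence $H(z) = 1/(1-\tan z)$.

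The main obstacle is modest: correctly accounting for the root's color only when it has two children. Everything else is routine.
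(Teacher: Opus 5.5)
Your proposal is correct and follows the paper's proof: the same root-removal decomposition, with the root's two-way color choice available exactly when both subtrees are nonempty, yields $H' = H^2 + (H-1)^2$ with $H(0)=1$. The only difference is in the solving step: the paper simply checks that $1/(1-\tan z)$ satisfies the equation and relies on the coefficients being recursively determined, whereas you derive this solution via the substitution $H = 1/(1-F)$, which reduces the equation to $F' = 1+F^2$.
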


\begin{proof}
The coefficient of $z^n/n!$ in $H'(z)$ is $H_{n+1}$. A bicolored decreasing binary tree on $[n+1]$ is determined by its root (labeled $n+1$), an ordered pair of bicolored decreasing binary trees on complementary subsets of~$[n]$ forming its left and right subtrees, and (if both subtrees are nonempty) a choice of red or blue for the root. Ordered pairs of bicolored decreasing binary trees are counted by $H(z)^2$, and pairs with both subtrees nonempty are counted by $(H(z)-1)^2$, so $H'(z) = H(z)^2 + (H(z)-1)^2$. The differential equation together with the initial condition $H(0) = 1$ determine the coefficients of $H(z)$ recursively, and one checks that $1/(1-\tan z)$ satisfies both.
\end{proof}

The generating function for the Euler numbers of odd index is $\sum_{n \text{ odd}} E_n z^n/n! = \tan z$, so the expansion $1/(1 - \tan z) = \sum_{m \geq 0} (\tan z)^m$ expresses $H(z)$ as the EGF for sequences of structures each counted by $\tan z$. The labeled sequence construction then gives the following combinatorial interpretation.

\begin{corollary}
\label{cor:sequences}
The number $H_n$ counts ordered set partitions of~$[n]$ into blocks of odd size, each block arranged as an alternating permutation.
\end{corollary}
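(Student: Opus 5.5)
The plan is to read the corollary directly off Proposition~\ref{prp:diffeq} using the labeled sequence construction of exponential generating functions. By Proposition~\ref{prp:diffeq}, $H(z) = 1/(1-\tan z) = \sum_{m \geq 0} (\tan z)^m$. The first step is to record that $\tan z = \sum_{n \text{ odd}} E_n z^n/n!$ is the EGF of a labeled class $\mathcal{A}$. An object of size $n$ in $\mathcal{A}$ is an alternating permutation of an $n$-element label set, with $n$ odd. By relabeling order-preservingly, the number of such arrangements of any fixed $n$-set equals the number $E_n$ of alternating permutations of~$[n]$. This uses the classical fact, assumed in the paper, that $E_n$ for odd $n$ has EGF $\tan z$.

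The second step is to invoke the labeled product. The coefficient of $z^n/n!$ in $(\tan z)^m$ counts ordered $m$-tuples $(B_1,\dots,B_m)$ of pairwise disjoint nonempty sets with union $[n]$, each $B_i$ carrying an $\mathcal{A}$-structure. Concretely, one expands $\prod_i \tan z$ and uses
\[
	[z^n/n!]\prod_{i=1}^m A(z) = \sum_{n_1+\cdots+n_m=n} \binom{n}{n_1,\ldots,n_m}\prod_i a_{n_i}.
\]
Here the multinomial coefficient chooses which labels go to which block. Since $\tan z$ has zero constant term, every block is nonempty. Since only odd coefficients are nonzero, every block has odd size. Thus such a tuple is exactly an ordered set partition of $[n]$ into $m$ odd blocks, each arranged alternatingly.

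The third step is to sum over $m \geq 0$. Because $\tan z$ has no constant term, $(\tan z)^m$ contributes only to $z^n$ with $n \geq m$, so the sum is a well-defined formal power series. Its coefficient of $z^n/n!$ counts ordered set partitions with any number of blocks, with $m=0$ giving the empty partition for $n=0$, matching $H_0=1$. Comparing coefficients with $H(z)$ then gives the claim.

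There is no real obstacle; the only point requiring care is confirming that ``alternating'' on an arbitrary label set is counted by $E_n$ independently of the labels, which the order-preserving relabeling handles. A bijective proof is deferred, as the paper indicates, so this argument is purely generating-functional.
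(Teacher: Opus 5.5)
Your proposal is correct and follows essentially the paper's route: the paper also expands $1/(1-\tan z)=\sum_{m\geq 0}(\tan z)^m$ and applies the labeled sequence construction, with $\tan z$ as the EGF of odd-length alternating permutations. You have simply spelled out the labeled-product details that the paper leaves implicit: the multinomial coefficients, why blocks are nonempty and of odd size, and why the sum over $m$ is a well-defined formal power series.
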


We call these objects \emph{alternating block sequences} on~$[n]$, written $C_1 \mid C_2 \mid \cdots \mid C_m$ where each $C_i$ is an alternating permutation of odd length on a subset of~$[n]$. We now construct a bijection between bicolored decreasing binary trees on~$[n]$ and alternating block sequences on~$[n]$.

The construction relies on a merge-and-split property of alternating permutations of odd length. If~$\alpha$ is such a permutation of length at least $3$, then $\alpha$ begins with an ascent and ends with a descent, so its maximum lies at an even index. It follows that if $\alpha$ and $\beta$ are alternating permutations of odd length on disjoint sets (each possibly a singleton) and~$n$ exceeds every entry of both, then the concatenation $\alpha\, n\, \beta$ is again alternating of odd length. Conversely, if~$n$ is the maximum of an alternating permutation $\delta$ of odd length at least $3$, then~$n$ occurs at an even position, and deleting~$n$ splits $\delta$ into two alternating permutations of odd length.

Define the map $f$ recursively. For $n = 1$, send the unique one-vertex tree to the single-block sequence~$1$. For $n \geq 2$, let $T$ be a bicolored decreasing binary tree on~$[n]$, and let $L$ and $R$ denote the left and right subtrees of its root.

\begin{itemize}[itemsep=0pt]
\item If the root has only a right child (so $L$ is empty), let $f(T) = n \mid f(R)$.
\item If the root has only a left child (so $R$ is empty), let $f(T) = f(L) \mid n$.
\item If the root has two children and is colored red, let $f(T) = f(L) \mid n \mid f(R)$.
\item If the root has two children and is colored blue, write $f(L) = L_1 \mid \cdots \mid L_s$ and $f(R) = R_1 \mid \cdots \mid R_t$. Then $f(T) = L_1 \mid \cdots \mid L_{s-1} \mid L_s\, n\, R_1 \mid R_2 \mid \cdots \mid R_t$: the entry~$n$ joins the last block of $f(L)$ to the first block of $f(R)$.
\end{itemize}

In the first three cases,~$n$ forms a singleton block. In the fourth, the merged block $L_s\, n\, R_1$ has odd size because $L_s$ and $R_1$ each do, and it is alternating by the merge-and-split property, since~$n$ is larger than all entries of $L_s$ and $R_1$.

\begin{theorem}
\label{thm:bijection-blocks}
The map $f$ is a bijection from bicolored decreasing binary trees on~$[n]$ to alternating block sequences on~$[n]$.
\end{theorem}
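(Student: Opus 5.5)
We will show that $f$ is well defined, with values in alternating block sequences, and then build an explicit inverse $g$. Both steps go by strong induction on $n$. Well-definedness has already been checked: in each of the four cases the blocks have odd size and are alternating, by the merge-and-split property and induction. Since Corollary~\ref{cor:sequences} shows that both sets have cardinality $H_n$, it would even suffice to prove that $f$ is injective. Still, we prefer to describe the inverse directly, since it makes the bijection transparent.

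The key observation is that the position of $n$ in $f(T)$ records which case occurred. Let $\sigma = C_1 \mid \cdots \mid C_m$ be an alternating block sequence on $[n]$ with $n\ge 2$, and let $C_j$ be the block containing $n$.
\begin{itemize}[itemsep=0pt]
\item If $C_j = n$ is a singleton and $j=1$, then $m\ge 2$ (because $n\ge2$), and we are in the ``only right child'' case with $R = g(C_2\mid\cdots\mid C_m)$.
\item If $C_j = n$ is a singleton with $j=m$, we are in the ``only left child'' case.
\item If $C_j = n$ is a singleton with $1<j<m$, we are in the red case, with $L$ and $R$ coming from the sequences before and after $C_j$.
\item If $|C_j|\ge 3$, then $n$ sits at an even position of $C_j$. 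Deleting it splits $C_j = \alpha\, n\, \beta$ into two nonempty alternating permutations of odd length, by the merge-and-split property. We are then in the blue case, with $L = g(C_1\mid\cdots\mid C_{j-1}\mid \alpha)$ and $R = g(\beta\mid C_{j+1}\mid\cdots\mid C_m)$.
\end{itemize}
In each case the vertex sets of the two subsequences are complementary subsets of $[n-1]$. Here $g$ is applied recursively to alternating block sequences on arbitrary finite label sets. We should first note that $f$ and $g$ make sense for any finite totally ordered label set, since only relative order matters.

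Next we check $g\circ f = \mathrm{id}$ and $f\circ g = \mathrm{id}$ by induction. For $g\circ f$, we must confirm that the four cases of $f$ produce outputs in the four disjoint classes above. The first case gives a singleton $n$ in first position. The second gives a singleton $n$ in last position. The red case gives a singleton $n$ strictly inside, since $f(L)$ and $f(R)$ are nonempty. The blue case gives a block of size at least $3$ containing $n$, since $L_s$ and $R_1$ are nonempty. Moreover, the subsequences that $g$ extracts are exactly $f(L)$ and $f(R)$. In the blue case this holds because splitting $L_s\,n\,R_1$ at $n$ returns $L_s$ and $R_1$. For $f\circ g$, the same case analysis applies in reverse.

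The main obstacle is this disjointness and exhaustiveness of the case split. It is where one uses that every block is nonempty, so that red-case singletons really are interior. It also uses that a block of size at least $3$ containing its maximum can only arise from the blue merge, and that the split pieces are again valid odd alternating blocks. All of this is supplied by the merge-and-split property stated before the theorem, so once the classes are identified the induction is routine.
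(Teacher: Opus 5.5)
Your proposal is correct and follows essentially the same route as the paper: the paper also inverts $f$ by locating $n$ and reading off the case from whether $n$ is a singleton block at the start, at the end, or in the interior, or lies in a block of size at least $3$. In the last case it splits that block at $n$ via the merge-and-split property and recurses; your explicit check that the four cases are disjoint and exhaustive (using $n \ge 2$ and the nonemptiness of blocks and of the split pieces) spells out details the paper leaves implicit.
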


\begin{proof}
To prove the theorem, it suffices to describe the inverse. Let $\alpha = C_1 \mid \cdots \mid C_m$ be an alternating block sequence on~$[n]$, and locate~$n$.

If~$n$ is a singleton block, its position determines the case: at the beginning means the root had only a right child, at the end means only a left child, and in the middle means the root had two children colored red. In each case, removing~$n$ and its block separator leaves one or two alternating block sequences on smaller ground sets, which determine the subtrees by induction.

If~$n$ lies inside a block of size at least $3$, the root had two children colored blue. By the merge-and-split property,~$n$ occurs at an even position within its block, and removing~$n$ splits that block into two alternating permutations $L_s$ and $R_1$ of odd length. Taking $L_s$ together with all preceding blocks gives $f(L)$, and taking $R_1$ together with all subsequent blocks gives $f(R)$, from which the subtrees are recovered by induction.
\end{proof}


\section{A bijection with nonplane trees}
\label{sec:bijection}

A \emph{nonplane decreasing~$1$-$2$ tree} on~$[n]$ is a rooted tree on vertex set~$[n]$ in which every vertex has zero, one, or two children, every vertex has a larger label than its children, and children are not designated left or right. The relabeling $i \mapsto n+1-i$ converts these to nonplane increasing~$1$-$2$ trees, which are counted by the Euler number $E_n$; see, for example, Kuznetsov, Pak, and Postnikov~\cite{kuznetsov:increasing-tree:} or Stanley~\cite[Chapter~5]{stanley:enumerative-com:2}. So nonplane decreasing~$1$-$2$ trees on~$[n]$ are also counted by $E_n$.

Call a vertex with two children \emph{full} and a vertex with one child \emph{unary}. To motivate the bijection, we count the structural data a bicolored decreasing binary tree carries relative to its unordered shape. A unary vertex contributes one binary label, specifying whether its child is left or right. A full vertex contributes two binary labels, one for the left-right designation of its children and one for its own color. Either way, the contribution is one label per child, that is, one label per non-root vertex, matching the $2^{n-1}$ choices of $\varepsilon$ in the theorem below.

\begin{figure}[t]
\[
\begin{array}{cc@{\qquad}c}
\begin{forest}
my tree
[7, fill=bluedp!40
  [6, fill=reddp!40
    [1]
    [5
      [, no edge, draw=none]
      [3]
    ]
  ]
  [4
    [2]
    [, no edge, draw=none]
  ]
]
\end{forest}
&
&
\begin{forest}
my tree
[7
  [6, fill=bluedp!40
    [1, fill=reddp!40]
    [5, fill=reddp!40
      [, no edge, draw=none]
      [3, fill=bluedp!40]
    ]
  ]
  [4, fill=bluedp!40
    [2, fill=reddp!40]
    [, no edge, draw=none]
  ]
]
\end{forest}
\end{array}
\]
\caption{An illustration of Theorem~\ref{thm:bijection-nonplane}, with red for $0$ and blue for $1$. The bicolored decreasing binary tree on the left has its full vertices ($7$, $6$) colored. On the right is the corresponding pair $(Q, \varepsilon)$: the same vertex set drawn as a nonplane decreasing $1$-$2$ tree, with each non-root vertex carrying a binary label. The label on the the child with smaller label among the children of its parent records the left-right designation (left $= 0$, right $= 1$); the label on the other child records the color of the parent.}
\label{fig:nonplane-bits}
\end{figure}

\begin{theorem}
\label{thm:bijection-nonplane}
There is a bijection between bicolored decreasing binary trees on~$[n]$ and pairs $(Q, \varepsilon)$, where $Q$ is a nonplane decreasing~$1$-$2$ tree on~$[n]$ and $\varepsilon$ assigns a binary label to each non-root vertex of $Q$, as illustrated in Figure~\ref{fig:nonplane-bits}.
\end{theorem}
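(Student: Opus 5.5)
The bijection is the one suggested by Figure~\ref{fig:nonplane-bits}: forget the plane structure and the colors to get $Q$, and record the forgotten information on the children. Given a bicolored decreasing binary tree $T$ on~$[n]$, let $Q$ be the underlying rooted tree with left/right designations erased. Since every vertex of $T$ has a larger label than its children and at most two children, $Q$ is a nonplane decreasing~$1$-$2$ tree on~$[n]$. Define $\varepsilon$ parent by parent.
\begin{itemize}[itemsep=0pt]
\item If $v$ is unary with child $c$, set $\varepsilon(c)=0$ if $c$ is a left child in $T$ and $\varepsilon(c)=1$ if it is a right child.
\item If $v$ is full with children $a<b$, set $\varepsilon(a)=0$ if $a$ is the left child of $v$ in $T$ and $\varepsilon(a)=1$ otherwise.
\item In the same full case, set $\varepsilon(b)=0$ if $v$ is red and $\varepsilon(b)=1$ if $v$ is blue.
\end{itemize}
Every non-root vertex has exactly one parent, and it is either the only child, the smaller child, or the larger child of that parent. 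So $\varepsilon$ is a well-defined labeling of all non-root vertices, and the map $\Phi(T)=(Q,\varepsilon)$ is well defined.

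To show $\Phi$ is a bijection, I will write down the inverse directly rather than compare cardinalities. Given $(Q,\varepsilon)$, build $T$ on the same vertex set with the same parent relation, so it is automatically decreasing with at most two children per vertex.
\begin{itemize}[itemsep=0pt]
\item For a unary vertex $v$ with child $c$, make $c$ a left child if $\varepsilon(c)=0$ and a right child if $\varepsilon(c)=1$.
\item For a full vertex $v$ with children $a<b$, put $a$ on the left and $b$ on the right if $\varepsilon(a)=0$, and swap them if $\varepsilon(a)=1$.
\item In the same full case, color $v$ red if $\varepsilon(b)=0$ and blue if $\varepsilon(b)=1$.
\end{itemize}

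Checking that the two maps compose to the identity in both directions is local at each vertex. The shape data and the parent relation are preserved by both maps. At each vertex the encoding is a bijection between the local data and the labels on its children:
\begin{itemize}[itemsep=0pt]
\item at a unary vertex, between $\{\text{left},\text{right}\}$ and $\{0,1\}$;
\item at a full vertex, between $\{\text{$a$ left},\text{$a$ right}\}\times\{\text{red},\text{blue}\}$ and $\{0,1\}^2$.
\end{itemize}
The global maps are products of these local bijections over all vertices, so they are mutually inverse.

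The only point needing care is that the two children of a full vertex must be distinguished canonically in a nonplane tree. This is where the labeling of the vertex set by~$[n]$ is used: the comparison $a<b$ identifies them without reference to left and right. Nothing else in the argument is delicate. As a consequence, combining with the fact recalled above that nonplane decreasing~$1$-$2$ trees on~$[n]$ are counted by $E_n$, we obtain $H_n=2^{n-1}E_n$ for $n\ge1$.
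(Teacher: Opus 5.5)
Your proposal is correct and is essentially the paper's proof. Both record the left/right designation on the only or smaller child and the parent's color on the larger child, and both recover $T$ top-down, using the comparison $a<b$ to tell apart the two children of a full vertex.
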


\begin{proof}
Given $T$, forget the plane embedding and vertex colors to obtain $Q$, and assign the binary label on each non-root vertex as follows. If $v$ is the the child with smaller label among the children of its parent, then $\varepsilon_v = 0$ if $v$ is a left child and $\varepsilon_v = 1$ if $v$ is a right child. If $v$ is not the child of least vertex label of its parent, then the parent of $v$ has a color (because it is full), and $\varepsilon_v$ records this color: $0$ for red and~$1$ for blue.

To recover $T$ from $(Q, \varepsilon)$, we build top-down. At a unary vertex, the child's label restores its direction. At a full vertex with children $v < w$, the label on $v$ determines which child is left and which is right, and the label on $w$ restores the parent's color.
\end{proof}

Since nonplane decreasing~$1$-$2$ trees on~$[n]$ are counted by $E_n$ and each is paired with $2^{n-1}$ choices of $\varepsilon$, we obtain the following.

\begin{corollary}
\label{cor:euler}
For $n \geq 1$, $H_n = 2^{n-1} E_n$.
\end{corollary}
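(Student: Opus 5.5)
The plan is to combine Theorem~\ref{thm:bijection-nonplane} with the known enumeration of nonplane decreasing~$1$-$2$ trees. By Theorem~\ref{thm:bijection-nonplane}, $H_n$ equals the number of pairs $(Q,\varepsilon)$, where $Q$ ranges over nonplane decreasing~$1$-$2$ trees on~$[n]$ and $\varepsilon$ is a binary labeling of the non-root vertices of $Q$. A tree on $[n]$ has exactly $n-1$ non-root vertices, whatever its shape. So for each fixed $Q$ there are exactly $2^{n-1}$ labelings $\varepsilon$, and hence
\[
	H_n = 2^{n-1}\cdot \#\{\text{nonplane decreasing $1$-$2$ trees on } [n]\}.
\]

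Next I would identify the second factor with $E_n$. The relabeling $i\mapsto n+1-i$ reverses every parent--child inequality and preserves the number of children at each vertex. It is therefore a bijection between nonplane decreasing and nonplane increasing $1$-$2$ trees on~$[n]$. The latter are counted by $E_n$, as cited from Kuznetsov--Pak--Postnikov and Stanley. Combining this with the display gives $H_n = 2^{n-1}E_n$ for $n\ge 1$.

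The only real content is the bijectivity in Theorem~\ref{thm:bijection-nonplane}, which we may assume. The step I would double-check is that the forward map there really lands in pairs where $\varepsilon$ is an arbitrary labeling of all $n-1$ non-root vertices. Each non-root vertex is either the smaller child of its parent, in which case its label is the left/right bit, or the larger child of a full parent, in which case its label is the parent's color. The inverse accepts every labeling, so no constraint on $\varepsilon$ arises.

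The hypothesis $n\ge1$ is needed because for $n=0$ the empty tree has no root, and the formula would give $2^{-1}$. As a sanity check I would compare small values with the expansion of $1/(1-\tan z)$ from Proposition~\ref{prp:diffeq}: $H_1=1$, $H_2=2$, $H_3=8$. These match $2^0E_1=1$, $2^1E_2=2$, and $2^2E_3=8$.
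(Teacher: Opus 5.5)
Your proposal is correct and matches the paper's argument: Theorem~\ref{thm:bijection-nonplane} gives $2^{n-1}$ labelings $\varepsilon$ for each nonplane decreasing $1$-$2$ tree. The relabeling $i \mapsto n+1-i$ then transfers the cited count $E_n$ from increasing to decreasing $1$-$2$ trees, which yields $H_n = 2^{n-1}E_n$ for $n \geq 1$.
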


Proposition~\ref{prp:diffeq} gives $H(z) = 1/(1-\tan z)$, and Corollary~\ref{cor:euler} gives $H_n = 2^{n-1} E_n$ for $n \geq 1$; together these give a combinatorial proof of the formula
\[
	\frac{1}{1 - \tan z} = \frac{1 + \sec 2z + \tan 2z}{2}.
\]
Indeed, shifting the index and rescaling $z \mapsto 2z$ in the classical formula $\sum_{n \geq 0} E_n\, z^n/n! = \sec z + \tan z$, we have
\[
	H(z) = 1 + \sum_{n \geq 1} 2^{n-1} E_n\, z^n/n! = 1 + \tfrac{1}{2}\!\left(\sum_{n \geq 0} E_n\, (2z)^n/n! - 1\right) = \frac{1 + \sec 2z + \tan 2z}{2}.
\]


\section{Boolean--Eulerian polynomials}
\label{sec:polynomials}

Recall that the Boolean--Eulerian number $\boel(n,k)$ counts bicolored decreasing binary trees on~$[n]$ with precisely $k-1$ right edges, and that the Boolean--Eulerian polynomial is
\[
	B_n(u) = \sum_{k=1}^{n} \boel(n,k)\, u^{k-1}.
\]
Since reflecting a tree through a vertical axis exchanges left and right edges, we have $\boel(n,k) = \boel(n,n-k+1)$ for all~$n$ and $k$, so the coefficient sequence of $B_n(u)$ is symmetric.

Define the bivariate exponential generating function
\[
	F(z,u) = \sum_{n \geq 0} B_n(u)\, z^n/n!,
\]
noting that $B_0(u) = 1$ because it counts the empty tree.

\begin{proposition}
\label{prp:bivariate-ode}
The generating function $F(z,u)$ satisfies
\[
	\frac{\partial F}{\partial z} = F + u(F-1) + 2u(F-1)^2,
\]
with the initial condition $F(0,u) = 1$.
\end{proposition}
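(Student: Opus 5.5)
We will follow the same root decomposition used in the proof of Proposition~\ref{prp:diffeq}, now keeping track of right edges with the variable~$u$. The coefficient of $z^n/n!$ in $\partial F/\partial z$ is $B_{n+1}(u)$. A bicolored decreasing binary tree on $[n+1]$ consists of a root labeled $n+1$, an ordered pair $(L,R)$ of bicolored decreasing binary trees on complementary subsets of~$[n]$, and a color for the root if both $L$ and $R$ are nonempty. The number of right edges of the whole tree is the sum of the numbers of right edges of $L$ and $R$. To this we add one more right edge exactly when $R$ is nonempty, namely the edge from the root to the root of~$R$. Left edges carry no weight, so the edge to $L$ contributes a factor of~$1$.

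We then split into the four cases according to which subtrees are empty. By the labeled product construction, ordered pairs of trees on complementary subsets with weight $u^{\text{right edges}}$ are counted by products of the corresponding exponential generating functions.

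\begin{itemize}[itemsep=0pt]
\item If both $L$ and $R$ are empty, the tree is the single vertex. This contributes $1$, which is the constant term.
\item If $R$ is empty and $L$ is nonempty, the contribution is $F-1$.
\item If $L$ is empty and $R$ is nonempty, the extra right edge gives $u(F-1)$.
\item If both are nonempty, we get the extra right edge and two colors, giving $2u(F-1)^2$.
\end{itemize}

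The first two cases combine to $1 + (F-1) = F$. Adding the remaining cases gives exactly
\[
\partial F/\partial z = F + u(F-1) + 2u(F-1)^2.
\]
The initial condition $F(0,u)=B_0(u)=1$ holds because $B_0$ counts only the empty tree.

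The main point to check is the bookkeeping of the one-child cases. The case ``only a left child'' merges with the empty case into the single term $F$, and no factor of $u$ appears on left edges. As a sanity check, we will set $u=1$. This recovers $F+(F-1)+2(F-1)^2 = F^2+(F-1)^2$, which matches Proposition~\ref{prp:diffeq}. Everything else is the routine labeled product argument already used there.
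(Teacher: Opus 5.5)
Your proposal is correct and matches the paper's proof essentially line for line. It uses the same root decomposition into four cases, contributing $1$, $F-1$, $u(F-1)$, and $2u(F-1)^2$. Your $u=1$ consistency check against Proposition~\ref{prp:diffeq} also appears in the paper, immediately after the proof.
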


\begin{proof}
Removing the root of a bicolored decreasing binary tree on~$[n]$ yields an ordered pair of bicolored decreasing binary trees on complementary subsets of $[n-1]$. There are four cases, depending on which subtrees are empty and, when both are present, the color of the root. A root with no children (the single-vertex tree) contributes~$1$. A root with only a left child contributes one nonempty left subtree and no new right edge, giving $F - 1$. A root with only a right child contributes one nonempty right subtree and one new right edge, giving $u(F-1)$. A root with two children has a nonempty right subtree (contributing a right edge, hence a factor of $u$), a nonempty left subtree, and a color (red or blue), giving $2u(F-1)^2$. Summing these four contributions yields $1 + (F-1) + u(F-1) + 2u(F-1)^2 = F + u(F-1) + 2u(F-1)^2$.
\end{proof}

Setting $u = 1$ recovers the differential equation $H' = H^2 + (H-1)^2$ from Proposition~\ref{prp:diffeq}.

The first few Boolean--Eulerian polynomials are
\begin{align*}
	B_1(u) &= 1, \\
	B_2(u) &= 1 + u, \\
	B_3(u) &= 1 + 6u + u^2, \\
	B_4(u) &= 1 + 19u + 19u^2 + u^3, \\
	B_5(u) &= 1 + 48u + 158u^2 + 48u^3 + u^4.
\end{align*}
One checks that $B_n(1) = H_n = 2^{n-1} E_n$ agrees with the values $1, 2, 8, 40, 256$.

The bivariate ODE can in principle be solved in closed form for $F(z,u)$ via a Riccati substitution, but we will not need the closed form. Section~\ref{sec:transform} instead uses the structural identity $B_n(u) = g(u)^{n-1} A_n(q(u))$ as the starting point for real-rootedness and interlacing.


\section{The Eulerian transform}
\label{sec:transform}

Recall from the introduction the Eulerian polynomial
\[
	A_n(x) = \sum_{p \in S_n} x^{\des(p)},
\]
whose coefficients are the Eulerian numbers, still not to be confused with the Euler numbers $E_n$. The main result of this section is that $B_n(u)$ is an explicit algebraic transform of $A_n(x)$. Gamma-positivity, real-rootedness, and interlacing all follow, though gamma-positivity also admits a direct combinatorial proof via Foata--Strehl orbits, which we give after the transform.

The transform involves an algebraic change of variable, built from the discriminant $D = \sqrt{u^2 - 6u + 1}$ (choosing the branch with $D(0) = 1$) and the auxiliary functions
\[
	g(u) = \frac{1+u+D}{2}, \qquad h(u) = \frac{1+u-D}{2}, \qquad q(u) = \frac{h(u)}{g(u)}.
\]
Here $g$ and $h$ are the two roots of $t^2 - (1+u)t + 2u = 0$, so $g + h = 1 + u$ and $gh = 2u$. The transform itself is $A_n(x) \mapsto g(u)^{n-1} A_n(q(u))$; the change of variable $q$ and the prefactor $g^{n-1}$ are tied together by the following identities, which we will use throughout the section.

\begin{observation}
\label{obs:gq-identities}
The functions $g$ and $q$ defined above satisfy
\[
\begin{array}{rcccl}
	(q-1)g	&=&	h - g	&=&	-D,		\\
	g(1+q)	&=&	g + h	&=&	1+u,		\\
	g^2 q	&=&	gh		&=&	2u.
\end{array}
\]
\end{observation}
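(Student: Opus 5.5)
The identities follow directly from the definitions, so the plan is to verify them column by column, using only $q = h/g$ and the Vieta relations $g+h = 1+u$ and $gh = 2u$. These Vieta relations hold because $g$ and $h$ are the two roots of $t^2-(1+u)t+2u=0$. To confirm this, compute directly: $g+h = \frac{(1+u+D)+(1+u-D)}{2} = 1+u$, and
\[
gh = \frac{(1+u)^2 - D^2}{4} = \frac{(1+u)^2-(u^2-6u+1)}{4} = \frac{8u}{4} = 2u.
\]
The only input here is $D^2 = u^2-6u+1$, which holds for either branch of the square root, so the choice $D(0)=1$ plays no role.

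Each row then follows by multiplying out $q = h/g$. For the first row, $(q-1)g = h - g$, and $h-g = \frac{(1+u-D)-(1+u+D)}{2} = -D$. For the second row, $g(1+q) = g + h = 1+u$ by Vieta. For the third row, $g^2 q = g\cdot h = 2u$ by Vieta.

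There is no real obstacle. The one point to mention is that $q = h/g$ requires $g \neq 0$. Since $g(0) = (1+0+1)/2 = 1$, this holds near $u=0$, which is the regime of formal power series and analytic continuation. Alternatively, one can state the identities with $q$ cleared from denominators, in the forms $h-g=-D$, $g+h=1+u$, and $gh=2u$. These are polynomial identities in $u$ and $D$ modulo $D^2=u^2-6u+1$, and the forms involving $q$ follow on the domain where $g\ne0$.
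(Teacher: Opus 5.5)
Your proposal is correct and matches the paper's implicit argument: the paper gives no separate proof, noting only that $g$ and $h$ are the roots of $t^2-(1+u)t+2u=0$, so $g+h=1+u$ and $gh=2u$, after which each row follows at once from $q=h/g$. Your direct verification of these relations from $D^2=u^2-6u+1$, and your remark that $g\neq 0$ near $u=0$, are harmless additions.
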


We can now state the transform precisely.

\begin{theorem}
\label{thm:transform}
For all $n \geq 1$,
\[
	B_n(u) = g(u)^{n-1}\, A_n\bigl(q(u)\bigr).
\]
\end{theorem}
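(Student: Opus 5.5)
We will prove the identity through the gamma expansion of the Eulerian polynomials. The same Foata--Strehl structure underlies both $A_n$ and $B_n$, so both polynomials should be expressible in the variables (number of peaks, number of descents).

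The first step is to recall the Foata--Strehl/Shapiro--Woan--Getu refinement. In the decreasing-binary-tree language, one can freely flip the side of the only child of any unary vertex. Grouping permutations into orbits under these flips gives
\[
	\sum_{p\in S_n} t^{\pk(p)} x^{\des(p)} \;=\; \sum_{j} \gamma_{n,j}\, t^j x^j (1+x)^{n-1-2j},
\]
where $\gamma_{n,j}$ counts orbit representatives with $j$ full vertices. The justification is as follows. A tree with $j$ full vertices has $n-1-2j$ unary vertices. Each full vertex contributes exactly one right edge, and each unary vertex contributes either $0$ or $1$ right edge. The number of peaks equals the number of full vertices, and this number is constant on an orbit. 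Specializing $t=1$ gives $A_n(x)=\sum_j \gamma_{n,j}x^j(1+x)^{n-1-2j}$. Specializing $t=2$, $x=u$ gives $B_n(u)=\sum_j \gamma_{n,j}(2u)^j(1+u)^{n-1-2j}$, since by definition $B_n(u)=\sum_p 2^{\pk(p)}u^{\des(p)}$.

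The second step is to substitute $x=q$ into the expansion of $A_n$ and multiply by $g^{n-1}$. This gives
\[
	g^{n-1}A_n(q)=\sum_j \gamma_{n,j}\,(g^2q)^j\,\bigl(g(1+q)\bigr)^{n-1-2j}.
\]
Here we split $g^{n-1}=g^{2j}g^{n-1-2j}$, which is valid because $n-1-2j\ge 0$. By Observation~\ref{obs:gq-identities}, $g^2q=2u$ and $g(1+q)=1+u$. The right-hand side is therefore exactly the expansion of $B_n(u)$ found in the first step. Both sides are polynomials in $u$, so the equality holds as polynomials, on a neighbourhood of $u=0$ where $D$ is analytic and $g(0)=1\neq0$.

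The main obstacle is stating the orbit decomposition carefully in the tree model. One must check that flipping a unary child's side is an involution that preserves the in-order bijection class, and that the orbit sum really factors as $(2u)^j(1+u)^{n-1-2j}$ with weight $2$ per full vertex. For this, note that the colors of the full vertices are independent of the flips, so each orbit simply carries a factor $2^j$.

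As a fallback, one could instead verify that $G(z,u)=\sum_n g^{n-1}A_n(q)z^n/n!$ satisfies the differential equation of Proposition~\ref{prp:bivariate-ode}. The known ODE for the Eulerian EGF, $\partial_z A = (1+x)A - x + \dots$, transforms under $z\mapsto gz$, and the same identities would close that computation. The combinatorial route above is shorter, however, so it is the one we will pursue.
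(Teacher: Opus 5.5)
Your proof is correct, but it reverses the paper's logic. The paper proves the transform by generating functions:
\begin{itemize}
\item it sets $G(z,u) = 1 + \bigl(E(q,gz)-1\bigr)/g$ with $E(x,z) = (x-1)/(x-e^{(x-1)z})$;
\item it uses $\partial E/\partial z = xE^2 + (1-x)E$ and the identities $g(1+q)=1+u$, $g^2q=2u$ to show that $G$ satisfies the equation of Proposition~\ref{prp:bivariate-ode};
\item it concludes $G=F$ by uniqueness.
\end{itemize}
The gamma expansion of $B_n$ (Proposition~\ref{prp:foata-strehl}) is proved afterwards by the tree version of Foata--Strehl. The paper then remarks that this expansion also follows algebraically from the transform.

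You instead prove the peak-marked orbit identity $\sum_{p} t^{\pk(p)} x^{\des(p)} = \sum_j \gamma_{n,j}\, t^j x^j (1+x)^{n-1-2j}$ first. You then specialize to $t=1$ and $t=2$ and substitute $x=q$, so in effect you run the paper's closing remark backwards. This is sound for two reasons:
\begin{itemize}
\item You define $\gamma_{n,j}$ as the number of orbits (equivalently, left-normal trees) with $j$ full vertices. Both specializations therefore visibly share the same coefficients, and you never need the peaks/no-double-descents description.
\item The substitution is legitimate because $g$ does not vanish on the chosen branch: $g=0$ forces $D=-(1+u)$, hence $u=0$, where $g=1$.
\end{itemize}

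Your route is finite and combinatorial, and it makes gamma-positivity the source of the transform rather than a consequence of it. It also shows that the transform is nothing but the two relations $g^2q=2u$ and $g(1+q)=1+u$ acting on a gamma expansion. Replacing $2u$ by $tu$ handles any weight $t$ per peak.

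The paper's route avoids the gamma expansion altogether. It also proves the stronger generating-function identity $F(z,u) = 1 + \bigl(E(q(u),g(u)z)-1\bigr)/g(u)$, which is effectively a closed form for $F$.

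Your fallback is exactly the paper's argument. The Eulerian ODE you half-recall there is off, however; the correct one is $\partial E/\partial z = xE^2+(1-x)E$.
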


\begin{proof}
We show that the right-hand side satisfies the same differential equation and initial condition as $F(z,u)$. The Eulerian polynomials have the exponential generating function
\[
	E(x,z) = \sum_{n \geq 0} A_n(x)\, z^n/n! = \frac{x-1}{x - e^{(x-1)z}},
\]
noting that $A_0(x) = 1$ because it counts the empty permutation (see, for example, Petersen~\cite{petersen:eulerian-number:}), and $E$ satisfies the differential equation
\[
	\frac{\partial E}{\partial z} = xE^2 + (1-x)E.
\]

Define
\[
	G(z,u) = 1 + \frac{1}{g(u)}\bigl(E(q(u),\, g(u)z) - 1\bigr).
\]
Then $G(0,u) = 1$, and for $n \geq 1$ the coefficient of $z^n/n!$ in $G$ is $g(u)^{n-1} A_n(q(u))$, so $G$ is the exponential generating function of the claimed right-hand side. Writing $Y = G - 1$, we have $E(q, gz) = 1 + gY$, so
\[
	\frac{\partial G}{\partial z} = \frac{\partial E}{\partial z}\bigg|_{\substack{x=q \\ z=gz}} = q(1+gY)^2 + (1-q)(1+gY) = 1 + g(1+q)Y + g^2 q\, Y^2.
\]
Applying the identities from Observation~\ref{obs:gq-identities}, this becomes
\[
	\frac{\partial G}{\partial z} = 1 + (1+u)Y + 2uY^2 = G + u(G-1) + 2u(G-1)^2.
\]
Since $G$ satisfies the same differential equation and initial condition as $F(z,u)$ from Proposition~\ref{prp:bivariate-ode}, formal-power-series uniqueness gives $G = F$, and comparing coefficients proves the theorem.
\end{proof}

%
%

We turn to gamma-positivity. Any polynomial of degree $n-1$ with symmetric coefficient sequence can be uniquely expanded in the basis $\{x^j (1+x)^{n-1-2j}\}_{j \geq 0}$, and the coefficients in this expansion are called the \emph{gamma-coefficients}. The classical gamma expansion of the Eulerian polynomial (see Foata and Sch\"utzenberger~\cite{foata:theorie-geometr:} or Petersen~\cite{petersen:eulerian-number:}) is
\[
	A_n(x) = \sum_{j \geq 0} \gamma_{n,j}\, x^j\,(1+x)^{n-1-2j},
\]
where the~$\gamma_{n,j}$ count permutations of $[n]$ with $j$ peaks and no double descents; in particular, the~$\gamma_{n,j}$ are nonnegative integers, so the Eulerian polynomial~$A_n(x)$ is \emph{gamma-positive}.

The standard combinatorial proof of this fact, due to Foata and Strehl~\cite{foata:rearrangements-:}, groups permutations into orbits under a valley-hopping action whose canonical representatives are the permutations counted by~$\gamma_{n,j}$. Call a decreasing binary tree on $[n]$ \emph{left-normal} if every unary vertex has its child on the left, and write $\mathcal{L}_{n,j}$ for the set of left-normal decreasing binary trees on $[n]$ with precisely $j$ full vertices. Under the bijection between permutations and decreasing binary trees, no-double-descent permutations correspond precisely to left-normal trees, with peaks corresponding to full vertices, so the Foata--Strehl theorem gives $|\mathcal{L}_{n,j}| = \gamma_{n,j}$.

In the decreasing-binary-tree model the valley-hopping action becomes especially transparent, and the labeled refinement falls out with the same effort:

\begin{proposition}
\label{prp:foata-strehl}
For all $n \geq 1$,
\[
	B_n(u) = \sum_{j \geq 0} 2^j\, \gamma_{n,j}\, u^j (1+u)^{n-1-2j}.
\]
\end{proposition}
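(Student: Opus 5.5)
The plan is to prove the identity combinatorially, using a tree version of valley-hopping on decreasing binary trees together with the colors on full vertices. The paper promises a combinatorial proof, though there is also a one-line algebraic check (noted at the end). Define the action as follows. For each vertex $v$ of a decreasing binary tree $T$ on $[n]$ that is unary, let $\phi_v$ move the single child subtree of $v$ from left to right or vice versa. This changes the number of right edges by exactly one and changes neither the set of full vertices nor any other unary vertex. The maps $\phi_v$ for distinct unary vertices commute and are involutions. They therefore generate an action of $(\mathbb{Z}/2)^{U(T)}$, where $U(T)$ is the set of unary vertices, and this set is invariant along the orbit. Each orbit contains exactly one left-normal tree, obtained by flipping every right-only child to the left.

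Next I count a single orbit. Take a left-normal tree $L \in \mathcal{L}_{n,j}$. It has $j$ full vertices, and since a binary tree on $n$ vertices has $n-1$ edges, it has $n-1-2j$ unary vertices. In $L$ itself the only right edges are the right children of the full vertices, so $L$ has exactly $j$ right edges. Flipping a chosen subset $S$ of the unary vertices adds $|S|$ right edges. So the orbit contributes
\[\sum_{S} u^{j+|S|} = u^j(1+u)^{n-1-2j}\]
to $A_n(u)$, which is exactly the Foata--Strehl expansion quoted above. For the bicolored version, coloring commutes with the action: the full vertices are fixed throughout the orbit, so each of the $2^j$ colorings of them is preserved. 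Hence the bicolored trees whose underlying tree lies in the orbit of $L$ contribute $2^j u^j (1+u)^{n-1-2j}$. Summing over orbits, that is, over $j$ and $L \in \mathcal{L}_{n,j}$, and using $|\mathcal{L}_{n,j}| = \gamma_{n,j}$, gives the formula for $B_n(u)$.

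The main obstacle is making sure the flips really preserve decreasing-tree structure and all the relevant statistics. This holds because moving a whole child subtree from one side to the other keeps every parent-child relation and label, so the tree stays decreasing. It also changes no other vertex's arity and alters exactly one edge's left/right status. I would also check that the orbit decomposition is a genuine partition, which follows because the group is abelian and acts freely.

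As an independent sanity check, the identity also follows from Theorem~\ref{thm:transform}. Substitute $x = q$ into the gamma expansion and multiply by $g^{n-1}$. Observation~\ref{obs:gq-identities} then gives $g^{n-1} q^j (1+q)^{n-1-2j} = (g^2 q)^j (g(1+q))^{n-1-2j} = (2u)^j (1+u)^{n-1-2j}$.
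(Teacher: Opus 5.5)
Your proposal is correct and essentially the paper's proof. The paper sets up a bijection $T \leftrightarrow (R, c, S)$, where $R$ is left-normal, $c$ colors the full vertices and $S$ is the set of unary vertices flipped to the right; this is exactly your orbit decomposition under unary flips, with the same contribution $2^j u^j (1+u)^{n-1-2j}$ per tree $R \in \mathcal{L}_{n,j}$. Your closing algebraic check via Theorem~\ref{thm:transform} matches the remark the paper makes just after Corollary~\ref{cor:gamma}.
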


\begin{proof}
We construct a bijection between bicolored decreasing binary trees $T$ on $[n]$ and triples $(R, c, S)$, where $R$ is a left-normal decreasing binary tree on $[n]$, $c$ is a coloring of the full vertices of~$R$ by red and blue, and $S$ is a subset of the unary vertices of~$R$.

Given a triple $(R, c, S)$, build $T$ as follows. Since $R$ is left-normal, every unary vertex of $R$ has its child on the left; flip the children of the unary vertices in $S$ to the right, leaving the children of the unary vertices outside $S$ on the left and leaving the full vertices and their colors untouched. This operation preserves the decreasing labeling and the full/unary classification of each vertex, so the result $T$ is a bicolored decreasing binary tree on $[n]$ with the same full vertices and the same colors as $R$.

Conversely, given a bicolored decreasing binary tree $T$ on $[n]$, we can recover the triple as follows. Take $R$ to be the left-normal tree obtained from $T$ by flipping every right-unary vertex to the left, take $c$ to be the colors of the full vertices, and take $S$ to be the set of unary vertices that were right-children in $T$. The forward and backward constructions are inverse to each other.

Suppose $R \in \mathcal{L}_{n,j}$. Counting edges of $R$, the $j$ full vertices contribute $2j$ edges and the unary vertices contribute one each, accounting for all $n-1$ edges, so $R$ has $n - 1 - 2j$ unary vertices. The right edges of $R$ are precisely the right children of its full vertices, contributing $j$ right edges, and each unary vertex flipped to the right adds one more. The trees mapping to $R$ therefore contribute
\[
	2^j \sum_{S \subseteq \mathrm{Unary}(R)} u^{j + |S|} = 2^j\, u^j\, (1+u)^{n-1-2j}
\]
to $B_n(u)$, where the factor $2^j$ counts the colorings $c$. Summing over $j$ and over $R \in \mathcal{L}_{n,j}$ proves the proposition, using the fact that $|\mathcal{L}_{n,j}| = \gamma_{n,j}$.
\end{proof}

Since the coefficients $2^j \gamma_{n,j}$ are nonnegative, we obtain the following.

\begin{corollary}
\label{cor:gamma}
The Boolean--Eulerian polynomials are gamma-positive.
\end{corollary}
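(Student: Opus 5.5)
The corollary follows directly from Proposition~\ref{prp:foata-strehl}. First I would check that $B_n(u)$ is a legitimate candidate for gamma-positivity. It has degree $n-1$: the tree with all $n-1$ edges on the right has $n-1$ right edges. Its coefficient sequence is symmetric, because $\boel(n,k)=\boel(n,n-k+1)$ by the reflection argument in Section~\ref{sec:polynomials}. So $B_n(u)$ has a unique expansion in the basis $\{u^j(1+u)^{n-1-2j}\}_{0\le j\le \lfloor (n-1)/2\rfloor}$. These basis elements are palindromic of the same center of symmetry, and they are triangular with respect to the lowest-degree term $u^j$, which gives uniqueness.

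Next I would read off the gamma-coefficients from Proposition~\ref{prp:foata-strehl}. That identity expresses $B_n(u)$ in exactly this basis, with coefficient $2^j\gamma_{n,j}$ on $u^j(1+u)^{n-1-2j}$. By uniqueness, these are the gamma-coefficients of $B_n(u)$.

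Finally, each $\gamma_{n,j}$ is a nonnegative integer. This holds either by the Foata--Strehl theorem, since $\gamma_{n,j}$ counts permutations with $j$ peaks and no double descents, or directly because $\gamma_{n,j}=|\mathcal{L}_{n,j}|$ is a cardinality. So $2^j\gamma_{n,j}\ge 0$ for every $j$, which is gamma-positivity.

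There is no real obstacle here. The only point needing any care is uniqueness of the expansion, and triangularity handles it. In fact the combinatorial proof of Proposition~\ref{prp:foata-strehl} already makes the coefficients manifestly nonnegative, since $2^j|\mathcal{L}_{n,j}|$ counts pairs $(R,c)$ with $R\in\mathcal{L}_{n,j}$ and $c$ a coloring of the full vertices of $R$.
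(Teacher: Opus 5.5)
Your proposal is correct and matches the paper's argument: the paper likewise reads off the gamma-coefficients $2^j\gamma_{n,j}$ directly from Proposition~\ref{prp:foata-strehl} and notes that they are nonnegative. Your additional check that the expansion in the basis $\{u^j(1+u)^{n-1-2j}\}$ is unique, via triangularity in the lowest-degree term, is a harmless extra detail that the paper leaves implicit.
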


%
%

The same formula also follows algebraically: substituting $x = q(u)$ in the Eulerian gamma expansion, multiplying by $g(u)^{n-1}$, and applying the identities from Observation~\ref{obs:gq-identities} converts $g^2 q$ to $2u$ and $g(1+q)$ to $1+u$.

%
%

Since the gamma-coefficients $2^j \gamma_{n,j}$ of the Boolean--Eulerian polynomials are nonnegative, the symmetry and unimodality of the coefficient sequence of $B_n(u)$ follow immediately. Real-rootedness does not follow from gamma-positivity in general, so we prove it separately. The key tool is monotonicity of $q$ on the negative reals.

\begin{corollary}
\label{cor:q-properties}
The function $q\colon (-\infty, 0) \to (-\infty, 0)$ is continuous and strictly increasing, and takes on all negative real values. In particular, $q$ is a bijection.
\end{corollary}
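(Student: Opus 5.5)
We want to show that $q = h/g$ maps $(-\infty,0)$ continuously, strictly increasingly and onto $(-\infty,0)$. The plan is to first make sure everything is real on the negative axis, then reduce to a one-variable monotonicity statement.

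\textbf{Reality and sign.} For $u<0$ the discriminant $u^2-6u+1 = (u-3)^2-8$ is at least $17>0$. So $D=\sqrt{u^2-6u+1}$ is a real, positive, continuous function on $(-\infty,0)$, matching the branch with $D(0)=1$. Moreover $D^2-(1+u)^2 = -8u>0$, so $D>|1+u|$. Hence $g=(1+u+D)/2>0$ and $h=(1+u-D)/2<0$, which shows $q=h/g$ is continuous and negative on $(-\infty,0)$.

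\textbf{Reduction to one variable.} Rather than differentiate the radical directly, I will use the identities of Observation~\ref{obs:gq-identities}. Since $gh=2u$ and $g+h=1+u$, we have
\[
\frac{(1+q)^2}{q} = \frac{(g+h)^2}{gh} = \frac{(1+u)^2}{2u}.
\]
The map $\phi(q)=(1+q)^2/q = q+2+1/q$ has derivative $1-1/q^2$. It is therefore strictly increasing on $(-\infty,-1)$ and strictly decreasing on $(-1,0)$, with maximum value $0$ at $q=-1$. Likewise $\psi(u)=(1+u)^2/(2u)$ is increasing on $(-\infty,-1)$, decreasing on $(-1,0)$, and peaks at $0$ at $u=-1$. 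On each side $\phi$ and $\psi$ have the same limits: both tend to $-\infty$ as the variable tends to $-\infty$ or to $0^-$, and both equal $0$ at $-1$.

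\textbf{Main obstacle: choosing the branch.} The step needing care is confirming that $q$ maps $(-\infty,-1)$ into $(-\infty,-1)$ and $(-1,0)$ into $(-1,0)$, rather than crossing to the other branch of $\phi$. At $u=-1$ we have $D=\sqrt 8$, $g=\sqrt2$, $h=-\sqrt2$, so $q(-1)=-1$. Continuity of $q$, together with the fact that $q=-1$ forces $\psi(u)=0$ and hence $u=-1$, means $q-(-1)$ has constant sign on each of the two intervals. A single test point fixes that sign, or equivalently the limits: as $u\to 0^-$ we get $h\to 0$ and $g\to 1$, so $q\to 0$. It follows that $q$ maps $(-1,0)$ into $(-1,0)$, and by the symmetry $q(1/u)=q(u)$ it maps $(-\infty,-1)$ into $(-\infty,-1)$. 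The symmetry holds because $D(1/u)=D(u)/|u|$ rescales $g$ and $h$ by the same factor $1/|u|$ after swapping roles appropriately; alternatively, I would check the test point $u\to-\infty$ directly.

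\textbf{Conclusion.} On each interval $q=\phi^{-1}\circ\psi$, where $\phi^{-1}$ is the branch of the inverse on the matching interval. On $(-\infty,-1)$ this is a composition of two increasing bijections onto $(-\infty,0)$. On $(-1,0)$ it is a composition of two decreasing bijections, which is increasing. Both pieces are onto their target intervals, and they glue continuously at $q(-1)=-1$. Therefore $q$ is a continuous, strictly increasing bijection of $(-\infty,0)$ onto itself.
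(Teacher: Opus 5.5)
Your route is different from the paper's, and it is sound in outline. The paper differentiates $q=(1+u-D)/(1+u+D)$ directly. Using $D'=(u-3)/D$ it gets $q'(u)=8(1-u)/\bigl(D(1+u+D)^2\bigr)>0$ on $(-\infty,0)$, and it reads off surjectivity from the limits $q\to-\infty$ as $u\to-\infty$ and $q\to0$ as $u\to0^-$. You instead turn $g+h=1+u$ and $gh=2u$ into the relation $\phi(q(u))=\psi(u)$, with $\phi(t)=t+2+1/t$ and $\psi=\phi/2$. Monotonicity and surjectivity are then inherited from the two monotone branches of $t\mapsto t+1/t$. This avoids differentiating the radical and exposes structure the paper does not use: the fixed point $q(-1)=-1$ and an inversion symmetry. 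The cost is that you must decide which branch $q$ lies on, a step the paper's explicit derivative never needs.

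That branch step is wrong as written. The symmetry is not $q(1/u)=q(u)$. For $u<0$ one has $D(1/u)=-D(u)/u$, hence $g(1/u)=h(u)/u$ and $h(1/u)=g(u)/u$, so $q(1/u)=1/q(u)$. Taken literally, $q(1/u)=q(u)$ would send $(-\infty,-1)$ into $(-1,0)$, the opposite of your claim. It is $q(1/u)=1/q(u)$ that transports $q\bigl((-1,0)\bigr)\subseteq(-1,0)$ to $q\bigl((-\infty,-1)\bigr)\subseteq(-\infty,-1)$.

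Your fallback also works and is simplest: as $u\to-\infty$, $g\to2$ and $h=1+u-g\to-\infty$, so $q\to-\infty$. Alternatively, check directly that $q(-2)=-(9+\sqrt{17})/8<-1$.

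There is also a smaller slip: for $u<0$ you have $u^2-6u+1>1$, not $\ge17$, since it tends to $1$ as $u\to0^-$. With these corrections your argument is complete.
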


\begin{proof}
We have $u^2 - 6u + 1 > 1$ for $u < 0$, so the discriminant $D = \sqrt{u^2 - 6u + 1}$ is real and positive for these values. Moreover $D^2 - (1+u)^2 = -8u > 0$, so $D > |1+u|$. Thus
\[
	q(u) = \frac{1+u-D}{1+u+D} < 0.
\]
Differentiating $q$ gives
\[
	q'(u) = \frac{(1-D')(1+u+D) - (1+u-D)(1+D')}{(1+u+D)^2} = \frac{2\bigl(D - (1+u)D'\bigr)}{(1+u+D)^2}.
\]
Implicit differentiation of $D^2 = u^2 - 6u + 1$ gives $D' = (u-3)/D$. Substituting this and using~${D^2 = u^2 - 6u + 1}$ simplifies the numerator to $8(1-u)/D$, and so
\[
	q'(u) = \frac{8(1-u)}{D(1+u+D)^2} > 0
\]
for $u < 0$.
Thus $q$ is strictly increasing on $(-\infty, 0)$. Since $q(u) \to -\infty$ as $u \to -\infty$ and $q(u) \to 0$ as~${u \to 0^-}$, the function $q$ maps $(-\infty, 0)$ bijectively onto $(-\infty, 0)$.
\end{proof}

We can now transport real-rootedness from $A_n$ to $B_n$.

\begin{theorem}
\label{thm:real-roots}
For all $n \geq 2$, the Boolean--Eulerian polynomial $B_n(u)$ has $n-1$ real, simple, negative roots.
\end{theorem}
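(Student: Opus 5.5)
The plan is to transport the roots of $A_n$ through Theorem~\ref{thm:transform} using the bijection of Corollary~\ref{cor:q-properties}. I will use the classical fact that for $n \geq 2$ the Eulerian polynomial $A_n(x)$ has degree $n-1$ and $n-1$ real, simple, negative roots $x_1 < \cdots < x_{n-1} < 0$ (Frobenius; see Petersen~\cite{petersen:eulerian-number:}). By Corollary~\ref{cor:q-properties}, each $x_i$ has a unique preimage $u_i \in (-\infty,0)$ with $q(u_i) = x_i$. Since $q$ is strictly increasing, the preimages satisfy $u_1 < \cdots < u_{n-1} < 0$, so they are $n-1$ distinct negative reals.

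Next I evaluate $B_n(u_i) = g(u_i)^{n-1} A_n(q(u_i)) = 0$ using Theorem~\ref{thm:transform}. This is legitimate because on $(-\infty,0)$ the discriminant $D$ is real and positive, so $g$ and $q$ are genuine real-valued functions there. The identity of Theorem~\ref{thm:transform} was proved as an identity of formal power series in $u$ around $0$, with $D$ the branch satisfying $D(0)=1$. However, $D$ is real-analytic on $(-\infty, 3-2\sqrt2)$, an interval containing $(-\infty,0]$ and on which $u^2-6u+1>0$. Both sides of the identity are analytic there, and they agree near $0$, so they agree on the whole interval by the identity theorem. This step is the only delicate point, and it is mild.

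It remains to check that these are all the roots and that they are simple. $B_n$ has degree exactly $n-1$, since its leading coefficient is $\boel(n,n) = \boel(n,1) = 1$. A polynomial of degree $n-1$ with $n-1$ distinct roots has each root simple and no others. Hence $B_n(u)$ has exactly the $n-1$ real, simple, negative roots $u_1, \dots, u_{n-1}$.
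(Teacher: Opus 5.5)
Your proposal is correct and follows the paper's proof: you pull the $n-1$ simple negative roots of $A_n$ back through the increasing bijection $q$ via Theorem~\ref{thm:transform}, then conclude by comparing with $\deg B_n = n-1$. Your analytic-continuation remark, which justifies evaluating the transform identity on $(-\infty,0)$, is extra care the paper leaves implicit. The paper instead uses $g>0$ there to get an ``if and only if'' for roots, which you correctly avoid needing, since you only use the implication $A_n(q(u_i))=0 \Rightarrow B_n(u_i)=0$ together with the degree count.
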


\begin{proof}
The Eulerian polynomial $A_n(x)$ has degree $n-1$ with $n-1$ real, simple, negative roots; this goes back to Frobenius~\cite{frobenius:uber-die-bernou:}, and can also be found in Brenti~\cite{brenti:unimodal-log-co:} and Petersen~\cite{petersen:eulerian-number:}. By Theorem~\ref{thm:transform}, $B_n(u) = g(u)^{n-1} A_n(q(u))$. For $u < 0$, the proof of Corollary~\ref{cor:q-properties} shows that $g(u) = (1+u+D)/2 > 0$, so the roots of $B_n$ in $(-\infty, 0)$ are the preimages under $q$ of the roots of $A_n$. Since $q$ is a bijection from $(-\infty, 0)$ to $(-\infty, 0)$ by Corollary~\ref{cor:q-properties}, each of the $n-1$ negative roots of $A_n$ has a unique negative preimage. Since $\deg B_n = n-1$, these account for all roots.
\end{proof}

As real-rootedness and positive coefficients give log-concavity (via Newton's inequalities), we have the following.

\begin{corollary}
\label{cor:logconcave}
For fixed~$n$, the sequence $\boel(n,1), \boel(n,2), \ldots, \boel(n,n)$ is log-concave.
\end{corollary}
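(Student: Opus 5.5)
The plan is to use Newton's inequalities, taking as input the real-rootedness established in Theorem~\ref{thm:real-roots}.

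For $n = 1$ the sequence has a single term, so there is nothing to prove. For $n \geq 2$, Theorem~\ref{thm:real-roots} says that $B_n(u) = \sum_{k=1}^{n} \boel(n,k)\, u^{k-1}$ has degree $n-1$ and only real roots. Write $a_i = \boel(n,i+1)$ for $0 \leq i \leq n-1$ and $d = n-1$. Newton's inequalities for a real-rooted polynomial $\sum_{i=0}^{d} a_i u^i$ state that
\[
	\left(\frac{a_i}{\binom{d}{i}}\right)^2 \geq \frac{a_{i-1}}{\binom{d}{i-1}} \cdot \frac{a_{i+1}}{\binom{d}{i+1}}
\]
for $1 \leq i \leq d-1$. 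The binomial ratio $\binom{d}{i}^2 / \bigl(\binom{d}{i-1}\binom{d}{i+1}\bigr)$ is at least $1$, since it equals $\frac{(i+1)(d-i+1)}{i(d-i)} > 1$. Multiplying through by $\binom{d}{i}^2$ therefore gives $a_i^2 \geq a_{i-1} a_{i+1}$. This is exactly the inequality $\boel(n,k)^2 \geq \boel(n,k-1)\,\boel(n,k+1)$ for $2 \leq k \leq n-1$.

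The log-concavity inequality needs no positivity hypothesis once Newton's inequalities are available. Still, it is worth noting that every $\boel(n,k)$ is a positive integer: the tree whose nodes form a single left chain has $0$ right edges, and flipping unary children of that chain realizes every right-edge count from $0$ to $n-1$. So the sequence has no internal zeros and is log-concave in the strong sense as well.

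The only potential obstacle is quoting Newton's inequalities in the correct normalized form. Once that is done, each remaining step follows directly.
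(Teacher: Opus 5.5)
Your proposal is correct and follows the paper's argument: real-rootedness from Theorem~\ref{thm:real-roots} combined with Newton's inequalities (plus positivity of the coefficients) gives log-concavity, and your normalization and binomial-ratio computation are right. One small point: passing from the normalized Newton inequality to $a_i^2 \geq a_{i-1}a_{i+1}$ by ``multiplying through'' uses $a_{i-1}a_{i+1} \geq 0$ (the opposite case is trivial). That condition holds here by the positivity you verify.
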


Beyond root structure for a single $n$, the transform also yields a relation between consecutive polynomials. The Eulerian polynomials $\{A_n(x)\}_{n \geq 2}$ have interlacing roots (see Savage and Visontai~\cite{savage:the-s-eulerian-:}), and since $q^{-1}$ is order-preserving by Corollary~\ref{cor:q-properties}, the interlacing is inherited by the images under~$q^{-1}$, which by Theorem~\ref{thm:transform} are the roots of $B_n$ and $B_{n-1}$ in $(-\infty, 0)$. Putting this together yields:

\begin{corollary}
\label{cor:interlacing}
For $n \geq 3$, the roots of $B_{n-1}(u)$ strictly interlace the roots of $B_n(u)$: between any two consecutive roots of $B_n(u)$, there is precisely one root of $B_{n-1}(u)$.
\end{corollary}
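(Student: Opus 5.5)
The interlacing will be transported from the Eulerian polynomials through the transform of Theorem~\ref{thm:transform}, using only the order-preserving bijection $q$ from Corollary~\ref{cor:q-properties}.

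First I fix the input. For $n \geq 3$ the polynomial $A_n(x)$ has $n-1$ simple negative roots, and $A_{n-1}(x)$ has $n-2$. Call them
\[
x_1 < \cdots < x_{n-1} \quad\text{and}\quad y_1 < \cdots < y_{n-2}.
\]
By the cited result of Savage and Visontai they strictly interlace:
\[
x_1 < y_1 < x_2 < y_2 < \cdots < y_{n-2} < x_{n-1}.
\]
I will state this precisely in the form needed, including the case $n=3$. There $A_2(x) = 1+x$ has the single root $-1$, and $A_3(x) = 1+4x+x^2$ has roots $-2\pm\sqrt3$, which visibly straddle $-1$.

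Next I identify the roots of $B_n$ and $B_{n-1}$. By Theorem~\ref{thm:transform}, $B_m(u) = g(u)^{m-1}A_m(q(u))$ for $m = n-1, n$. By the proof of Theorem~\ref{thm:real-roots}, $g(u) > 0$ on $(-\infty,0)$. So for $u<0$ we have $B_m(u) = 0$ if and only if $q(u)$ is a root of $A_m$. Theorem~\ref{thm:real-roots} says all $m-1$ roots of $B_m$ are negative and simple. Hence the roots of $B_n$ are exactly $q^{-1}(x_1), \ldots, q^{-1}(x_{n-1})$, and the roots of $B_{n-1}$ are exactly $q^{-1}(y_1), \ldots, q^{-1}(y_{n-2})$. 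Here $q^{-1}\colon(-\infty,0)\to(-\infty,0)$ is the inverse of the bijection in Corollary~\ref{cor:q-properties}.

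Finally, $q$ is strictly increasing, so $q^{-1}$ is strictly increasing as well. Applying $q^{-1}$ to the chain of strict inequalities therefore preserves it:
\[
q^{-1}(x_1) < q^{-1}(y_1) < q^{-1}(x_2) < \cdots < q^{-1}(y_{n-2}) < q^{-1}(x_{n-1}).
\]
This says exactly that each open interval between consecutive roots of $B_n$ contains precisely one root of $B_{n-1}$. The count of $n-2$ roots of $B_{n-1}$ against $n-2$ gaps rules out any extra roots.

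The only step needing care is the input interlacing statement for $A_n$, and making sure the cited result is strict interlacing of consecutive Eulerian polynomials. The rest is monotonicity.
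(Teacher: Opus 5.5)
Your proposal is correct and follows essentially the paper's argument. The paper likewise takes the strict interlacing of consecutive Eulerian polynomials from Savage and Visontai and transports it through the order-preserving bijection $q^{-1}$, using Theorem~\ref{thm:transform} (with $g>0$ on $(-\infty,0)$) to identify the roots of $B_n$ and $B_{n-1}$ as the $q$-preimages of the roots of $A_n$ and $A_{n-1}$; your root-count bookkeeping and the $n=3$ check simply make its brief sketch explicit.
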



\section{Combinatorial unimodality}
\label{sec:unimodal}

The gamma-positivity established in Corollary~\ref{cor:gamma} already implies that the coefficient sequence of~$B_n(u)$ is unimodal (as does the log-concavity established in Corollary~\ref{cor:logconcave}). We give an independent combinatorial proof, because it illustrates how the injection from the Boolean--Narayana setting~\cite{bona:boolean--naraya:} extends to labeled trees.

\begin{theorem}
\label{thm:unimodal}
For all fixed~$n$, the sequence $\boel(n,1), \boel(n,2), \ldots, \boel(n,n)$ is unimodal.
\end{theorem}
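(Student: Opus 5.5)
By the symmetry $\boel(n,k)=\boel(n,n-k+1)$ from Section~\ref{sec:polynomials}, it suffices to show that $\boel(n,k)\le\boel(n,k+1)$ whenever $k+1\le n-k+1$, i.e.\ whenever the trees counted by $\boel(n,k)$ have $k-1$ right edges and at most $n-k-1$ left edges, so strictly fewer right edges than left edges. For each such $k$ we will build an injection from bicolored decreasing binary trees with $k-1$ right edges into those with $k$ right edges, following the Boolean--Narayana injection of~\cite{bona:boolean--naraya:}. The idea is to find a unary vertex whose only child is on the left and flip it to the right. Flipping changes neither the decreasing labeling nor the set of full vertices, so the colors are carried along unchanged.

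We will choose the flip canonically. Every tree has $n-1$ edges, and full vertices contribute one left and one right edge each. Hence the difference between the number of left edges and right edges equals the difference between the number of left-unary and right-unary vertices. In our range there are $k-1 < n-k$ and so there are strictly more left-unary vertices than right-unary ones, and in particular at least one left-unary vertex. Following~\cite{bona:boolean--naraya:}, we list the unary vertices in a fixed canonical order that does not depend on the orientation of unary children. Preorder on the underlying nonplane structure works, with children of full vertices ordered by left/right, which is unchanged by flips of unary vertices. We record the left-unary vertices as $+1$ and the right-unary ones as $-1$. This gives a $\pm1$ sequence with positive total sum. We then apply the standard ballot/reflection map: take the first position at which the partial sum reaches its final maximum (equivalently, the last time a new record is attained) and flip the corresponding $+1$ to a $-1$. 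That is, we turn that left-unary vertex into a right-unary one.

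For injectivity, we recover the flipped vertex from the image. In the image, the sequence has total sum two smaller. The flipped position is then characterized by a dual condition, for instance the last position at which the partial sum attains a specified value determined by the total. Flipping it back recovers the original tree. Because the order of unary vertices, the full vertices, the colors, and the labels are all untouched, the image determines everything else. This is precisely the lattice-path argument used for Narayana-type unimodality. The coloring and the labeling simply ride along, which is the point of the remark that the injection ``extends without modification.''

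The main obstacle is making the canonical choice genuinely invertible. We must check that the chosen flip point is recognizable after flipping and lies in a valid position only when the total sum is positive, i.e.\ exactly in the range $k<n-k+1$. If the first-maximum rule gives trouble at the boundary, we will fall back on a cyclic-lemma or parenthesis-matching variant: match left-unary and right-unary vertices as parentheses in the canonical order, and flip the first unmatched left-unary vertex. This is invertible because, after flipping, that vertex becomes the last unmatched right-unary vertex in the image.
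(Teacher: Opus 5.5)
Your proposal is correct, and its skeleton matches the paper's. Only the left/right designation of children of unary vertices is changed, so labels, full vertices and colors are untouched, and the change is chosen by a lattice-path condition along an order that such changes do not affect. The injection itself is different, though.

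The paper orders all vertices by decreasing depth and then left to right, and takes the first prefix $T_i$ with $\ell(T_i)-r(T_i)=1$. It then flips \emph{every} unary vertex of $T_i$, an Andr\'e-type reflection of that prefix taken verbatim from the Boolean--Narayana argument. It recovers $T_i$ as the first prefix with one more right edge than left edge. You instead flip a \emph{single} left-unary vertex, so the image differs from the original in exactly one edge. With left-unary vertices as opening brackets, your fallback rule is the Greene--Kleitman rule. Iterating it splits each fiber of Proposition~\ref{prp:foata-strehl} (a fixed left-normal tree with a fixed coloring, contributing $u^j(1+u)^{n-1-2j}$) into chains symmetric about $(n-1)/2$ right edges. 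So you get a symmetric chain decomposition, and you see explicitly that unimodality is inherited from the binomial factors. The paper's version avoids isolating the unary vertices and transfers the unlabeled proof unchanged, but its image can differ from the original in many unary vertices.

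One step of your write-up is wrong as stated: for the first-maximum rule, the flipped position is not ``the last position at which the partial sum attains a value determined by the total.'' Write the partial sums as $P_0=0,P_1,\ldots,P_m=S$. Let $M$ be the largest partial sum and $t^*$ the first index with $P_{t^*}=M$. Then $t^*\ge 1$ and the step at $t^*$ is $+1$, because $M\ge S\ge 1>P_0$. The image has partial sums $P_t\le M-1$ for $t<t^*$, with equality at $t^*-1$, and $P_t-2\le M-2$ for $t\ge t^*$. So $t^*$ is the step immediately after the \emph{last} time the image attains \emph{its own} maximum $M-1$. That quantity is not determined by the total: $(+1,+1,-1,-1,+1)$ and $(+1)$ both have total $1$ but maxima $2$ and $1$.

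With this correction the injection is complete, and so it is with your bracket-matching rule, which is correct as you state it. The first unmatched opening bracket lies inside no matched pair, since the segment between matched brackets is balanced. Flipping it therefore leaves all other matches intact and makes it the last unmatched closing bracket. Either version works whenever the total is at least $1$, including your boundary case $2k=n$.
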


\begin{proof}
Since the sequence is symmetric, it suffices to show that $\boel(n,k) \leq \boel(n,k+1)$ for $k \leq (n-1)/2$. Write $\mathcal{E}(n,k)$ for the set of bicolored decreasing binary trees on~$[n]$ with $k-1$ right edges, so $|\mathcal{E}(n,k)| = \boel(n,k)$.

B\'ona~\cite{bona:boolean--naraya:} constructs an injection proving the corresponding inequality for Boolean--Narayana numbers. The strategy is to identify a canonical initial piece of the tree in which left edges outnumber right edges by exactly one, and then swap the left-right designations of the unary vertices in that piece. The swap converts one such left edge into a right edge, raising the right-edge count by one, and the canonical piece is recoverable from the image, so the swap is invertible.

We recall the construction. Order the vertices of a tree by decreasing depth, and within each depth from left to right, ending at the root. Given a tree with $k-1$ right edges and $k \leq (n-1)/2$, let $T_i$ denote the subforest induced by the first~$i$ vertices in this ordering, and write $\ell(T_i)$ and $r(T_i)$ for its numbers of left and right edges. Then $\ell(T_2) - r(T_2) \leq 1$, while $\ell(T_n) - r(T_n) = (n-k) - (k-1) \geq 2$. When a vertex is added to the subforest, it contributes either no edge, a single left or right edge to its only child, or one left and one right edge to its two children. Hence $\ell(T_i) - r(T_i)$ changes by at most~$1$ at each step, so there is a smallest index~$i$ for which $\ell(T_i) - r(T_i) = 1$. Swap left and right subtrees at every unary vertex within~$T_i$, leaving the rest of the tree unchanged. Inside~$T_i$ this turns~${\ell(T_i)}$ left edges into right edges and vice versa, so the new subforest has~${\ell(T_i)}$ right edges and~${r(T_i)}$ left edges, a net gain of one right edge over the original~$T_i$; edges outside~$T_i$ are unchanged, so the whole tree gains exactly one right edge and now has~$k$. To invert the map, locate the smallest index~$j$ such that the modified subforest has one more right edge than left edge: this~$j$ equals our~$i$, because the swap exchanged the values of~$\ell$ and~$r$ on $T_i$ while preserving them outside. So $T_i$ is recovered from the image, and reversing the swap recovers the original tree.

This construction modifies only edge directions. It preserves the underlying parent-child relationships, the vertex labels, and which vertices have two children; in particular, it preserves both the decreasing labeling and the colors. Applying it to trees in $\mathcal{E}(n,k)$ therefore gives an injection into~${\mathcal{E}(n,k+1)}$ for all $k \leq (n-1)/2$.
\end{proof}


\section{Further directions}
\label{sec:further}

In both the unlabeled setting of binary plane trees, Boolean--Catalan, and Boolean--Narayana numbers~\cite{bona:stack-sorting-p:, bona:boolean--naraya:}, and the labeled setting of this paper, coloring the full vertices red or blue produces a family whose generating polynomials are an explicit transform of the uncolored originals, and the nice enumerative properties of the Narayana and Eulerian polynomials transfer through these transforms. Two questions suggest themselves.

The first asks whether this pattern extends to other families. Are there other families of trees in bijection with permutations, where right edges correspond to descents, and where a two-coloring of full vertices produces a similarly well-behaved refinement? More broadly, what is the right framework in which transforms like $B_n(u) = g(u)^{n-1} A_n(q(u))$ should be expected? B\'ona raised a version of this question in~\cite{bona:boolean--naraya:}, and Theorem~\ref{thm:bijection-nonplane}, which connects bicolored decreasing binary trees to nonplane decreasing~$1$-$2$ trees and the Euler numbers, suggests that the interplay between plane embeddings, labels, and colorings might have more to offer.

The second concerns vertical log-concavity. The Boolean--Narayana paper~\cite{bona:boolean--naraya:} establishes that for fixed $k$, the sequence $\mathrm{BoNa}(n,k)$ for $n \geq k+1$ is log-concave. For the Boolean--Eulerian numbers, even the classical Eulerian analogue appears open: we are not aware of a proof that the column sequence $\bigl(\langle {n \atop k} \rangle\bigr)_{n \geq k+1}$ of Eulerian numbers is log-concave for fixed $k$. (Brenti's stronger conjecture~\cite{brenti:the-application:}, that the Eulerian triangle is totally positive, also remains open.) Subject to these qualifications, we conjecture the following.

\begin{conjecture}
\label{conj:vertical}
For fixed $k \geq 2$, the sequence $\boel(n,k)$ for $n \geq k$ is log-concave.
\end{conjecture}

%
%
%
%
%


\begin{thebibliography}{10}

\bibitem{bona:boolean--naraya:}
{\sc B\'{o}na, M.}
\newblock {B}oolean--{N}arayana numbers.
\newblock arXiv:2602.11355 [math.CO].

\bibitem{bona:stack-sorting-p:}
{\sc B\'{o}na, M.}
\newblock Stack-sorting preimages and $0$-$1$-trees.
\newblock arXiv:2505.18295 [math.CO].

\bibitem{brenti:unimodal-log-co:}
{\sc Brenti, F.}
\newblock Unimodal, log-concave and {P}\'olya frequency sequences in
  combinatorics.
\newblock {\em Mem. Amer. Math. Soc. 81}, 413 (1989), viii+106.

\bibitem{brenti:the-application:}
{\sc Brenti, F.}
\newblock The applications of total positivity to combinatorics, and
  conversely.
\newblock In {\em Total positivity and its applications ({J}aca, 1994)},
  vol.~359 of {\em Math. Appl.} Kluwer Acad. Publ., Dordrecht, 1996,
  pp.~451--473.

\bibitem{foata:theorie-geometr:}
{\sc Foata, D., and Sch\"utzenberger, M.-P.}
\newblock {\em Th\'eorie g\'eom\'etrique des polyn\^omes eul\'eriens},
  vol.~138 of {\em Lecture Notes in Mathematics}.
\newblock Springer-Verlag, Berlin-New York, 1970.

\bibitem{foata:rearrangements-:}
{\sc Foata, D., and Strehl, V.}
\newblock Rearrangements of the symmetric group and enumerative properties of
  the tangent and secant numbers.
\newblock {\em Math. Z. 137\/} (1974), 257--264.

\bibitem{frobenius:uber-die-bernou:}
{\sc Frobenius, G.}
\newblock \"{U}ber die {Bernoullischen} {Zahlen} und die {Eulerschen}
  {Polynome}.
\newblock {\em Berl. Ber. 1910\/} (1910), 809--847.

\bibitem{kuznetsov:increasing-tree:}
{\sc Kuznetsov, A.~G., Pak, I.~M., and Postnikov, A.~E.}
\newblock Increasing trees and alternating permutations.
\newblock {\em Uspekhi Mat. Nauk 49}, 6(300) (1994), 79--110.

\bibitem{petersen:eulerian-number:}
{\sc Petersen, T.~K.}
\newblock {\em Eulerian numbers}.
\newblock Birkh\"auser Advanced Texts: Basler Lehrb\"ucher.
  Birkh\"auser/Springer, New York, 2015.

\bibitem{savage:the-s-eulerian-:}
{\sc Savage, C.~D., and Visontai, M.}
\newblock The {$\bold{s}$}-{E}ulerian polynomials have only real roots.
\newblock {\em Trans. Amer. Math. Soc. 367}, 2 (2015), 1441--1466.

\bibitem{stanley:enumerative-com:2}
{\sc Stanley, R.~P.}
\newblock {\em Enumerative Combinatorics, Vol. 2}, vol.~62 of {\em Cambridge
  Stud. in Advanced Math.}
\newblock Cambridge University Press, Cambridge, England, 1999.

\end{thebibliography}

%
%
%
%
%

\end{document}